\documentclass{amsart}
\usepackage{amsmath}
\usepackage{amsfonts}

\setcounter{MaxMatrixCols}{10}

\newtheorem{theorem}{Theorem}
\theoremstyle{plain}

\newtheorem{corollary}{Corollary}

\newtheorem{lemma}{Lemma}

\newtheorem{proposition}{Proposition}

\numberwithin{equation}{section}
\input{tcilatex}

\begin{document}
\title[Integral inequalities]{New estimates on generalization of some
integral inequalities for quasi-convex functions and their applications}
\author{\.{I}mdat \.{I}\c{s}can}
\address{Department of Mathematics, Faculty of Arts and Sciences,\\
Giresun University, 28100, Giresun, Turkey.}
\email{imdat.iscan@giresun.edu.tr}
\date{July 10, 2012}
\subjclass[2000]{26A51, 26D15}
\keywords{quasi-convex function, Simpson's inequality, Hermite-Hadamard's
inequality, midpoint inequality, trapezoid ineqaulity.}

\begin{abstract}
In this paper, we derive new estimates for the remainder term of the
midpoint, trapezoid, and Simpson formulae for functions whose derivatives in
absolute value at certain power are quasi-convex. Some applications to
special means of real numbers are also given.
\end{abstract}

\maketitle

\section{Introduction}

Let $f:I\subset \mathbb{R\rightarrow R}$ be a convex function defined on the
interval $I$ of real numbers and $a,b\in I$ with $a<b$. The following
inequality%
\begin{equation}
f\left( \frac{a+b}{2}\right) \leq \frac{1}{b-a}\dint\limits_{a}^{b}f(x)dx%
\leq \frac{f(a)+f(b)}{2}\text{.}  \label{1-1}
\end{equation}

holds. This double inequality is known in the literature as Hermite-Hadamard
integral inequality for convex functions.Note that some of the classical
inequalities for means can be derived from (\ref{1-1}) for appropriate
particular selections of the mapping $f$. Both inequalities hold in the
reversed direction if f is concave. See \cite{AD10,ADK10,ADK11,DP00,I07,K04}%
, the results of the generalization, improvement and extention of the famous
integral inequality (\ref{1-1}).

The notion of quasi-convex functions generalizes the notion of convex
functions. More precisely, a function $f:[a,b]\mathbb{\rightarrow R}$ is
said quasi-convex on $[a,b]$ if 
\begin{equation*}
f\left( \alpha x+(1-\alpha )y\right) \leq \sup \left\{ f(x),f(y)\right\} ,
\end{equation*}%
for any $x,y\in \lbrack a,b]$ and $\alpha \in \left[ 0,1\right] .$ Clearly,
any convex function is a quasi-convex function. Furthermore, there exist
quasi-convex functions which are not \ convex (see \cite{I07}).

The following inequality is well known in the literature as Simpson's
inequality .

Let $f:\left[ a,b\right] \mathbb{\rightarrow R}$ be a four times
continuously differentiable mapping on $\left( a,b\right) $ and $\left\Vert
f^{(4)}\right\Vert _{\infty }=\underset{x\in \left( a,b\right) }{\sup }%
\left\vert f^{(4)}(x)\right\vert <\infty .$ Then the following inequality
holds:%
\begin{equation*}
\left\vert \frac{1}{3}\left[ \frac{f(a)+f(b)}{2}+2f\left( \frac{a+b}{2}%
\right) \right] -\frac{1}{b-a}\dint\limits_{a}^{b}f(x)dx\right\vert \leq 
\frac{1}{2880}\left\Vert f^{(4)}\right\Vert _{\infty }\left( b-a\right) ^{2}.
\end{equation*}

\bigskip In recent years many authors have studied error estimations for
Simpson's inequality; for refinements, counterparts, generalizations and new
Simpson's type inequalities, see \cite{ADD09,AH11,SA11,SSO10,SSO10a}

In \cite{ADK10}, Alomari et al. established some upper bound for the right
-hand side of Hadamard's inequality for quasi-convex mappings, The authors
obtained the following results:

\begin{theorem}
Let $f:I\subset \mathbb{R\rightarrow R}$ be a differentiable mapping on $%
I^{\circ }$ such that $f^{\prime }\in L[a,b]$, where $a,b\in I$ with $a<b.$
If $\left\vert f^{\prime }\right\vert ^{p/(p-1)}$ is an quasi-convex on $%
[a,b]$, for $p>1,$ then the following inequality holds: 
\begin{eqnarray}
\left\vert \frac{f(a)+f(b)}{2}-\frac{1}{b-a}\dint\limits_{a}^{b}f(x)dx\right%
\vert &\leq &\frac{b-a}{4\left( p+1\right) ^{1/p}}\left[ \left( \sup \left\{
\left\vert f^{\prime }(\frac{a+b}{2})\right\vert ^{\frac{p}{p-1}},\left\vert
f^{\prime }(b)\right\vert ^{\frac{p}{p-1}}\right\} \right) ^{\frac{p-1}{p}%
}\right.  \notag \\
&&\left. +\left( \sup \left\{ \left\vert f^{\prime }(\frac{a+b}{2}%
)\right\vert ^{\frac{p}{p-1}},\left\vert f^{\prime }(a)\right\vert ^{\frac{p%
}{p-1}}\right\} \right) ^{\frac{p-1}{p}}\right] .  \label{1-4}
\end{eqnarray}
\end{theorem}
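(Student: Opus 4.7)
\medskip

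\noindent\textbf{Proof proposal.} The plan is to start from the standard trapezoid-type identity
\[
\frac{f(a)+f(b)}{2}-\frac{1}{b-a}\int_{a}^{b}f(x)\,dx=\frac{b-a}{2}\int_{0}^{1}(1-2t)\,f^{\prime}\!\left(ta+(1-t)b\right)dt,
\]
which is obtained by integration by parts (or can be cited from Dragomir--Agarwal). Taking absolute values and splitting the $t$-integral at the midpoint $t=1/2$ will isolate two regions, on each of which the argument $ta+(1-t)b$ lies in a sub-interval with endpoints $\{b,(a+b)/2\}$ or $\{(a+b)/2,a\}$. This splitting is precisely what allows the midpoint value $|f^{\prime}((a+b)/2)|$ to appear in the final estimate.

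Next I would apply H\"older's inequality on each half-interval, with exponents $p$ and $p/(p-1)$, to the factors $|1-2t|$ and $|f^{\prime}(ta+(1-t)b)|$. The power-type integrals
\[
\int_{0}^{1/2}(1-2t)^{p}\,dt=\int_{1/2}^{1}(2t-1)^{p}\,dt=\frac{1}{2(p+1)}
\]
are elementary. The key step is to control the remaining factors using quasi-convexity: for $t\in[0,1/2]$ the point $ta+(1-t)b$ is a convex combination of $b$ and $(a+b)/2$ (with weights $1-2t$ and $2t$), so
\[
\bigl|f^{\prime}(ta+(1-t)b)\bigr|^{p/(p-1)}\leq \sup\!\left\{\bigl|f^{\prime}(\tfrac{a+b}{2})\bigr|^{p/(p-1)},\bigl|f^{\prime}(b)\bigr|^{p/(p-1)}\right\},
\]
and symmetrically for $t\in[1/2,1]$ with $a$ replacing $b$. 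Integrating these constant bounds over each half-interval contributes a factor of $1/2$.

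Combining the pieces yields, for the first half,
\[
\int_{0}^{1/2}|1-2t|\,|f^{\prime}(ta+(1-t)b)|\,dt\leq \left(\frac{1}{2(p+1)}\right)^{\!1/p}\!\!\left(\frac{1}{2}\right)^{\!(p-1)/p}\!\!\left(\sup\!\left\{\bigl|f^{\prime}(\tfrac{a+b}{2})\bigr|^{\frac{p}{p-1}},\bigl|f^{\prime}(b)\bigr|^{\frac{p}{p-1}}\right\}\right)^{\!(p-1)/p},
\]
which simplifies to $\frac{1}{2(p+1)^{1/p}}$ times the corresponding supremum factor; the analogous expression holds for the second half. Multiplying by $(b-a)/2$ from the identity and adding the two contributions produces exactly the bound~(\ref{1-4}).

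The main obstacle, if any, is a purely bookkeeping one: ensuring that the two halves are parametrized consistently so that the convex-combination representation aligns with the correct pair of endpoints, and that the H\"older exponents $1/p$ and $(p-1)/p$ collapse cleanly with the two factors of $1/2$ to give the stated constant $1/[4(p+1)^{1/p}]$. Once the identity and the splitting at $t=1/2$ are in place, the rest is a routine application of H\"older and the definition of quasi-convexity.
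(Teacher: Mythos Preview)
Your argument is correct: the Dragomir--Agarwal identity, the split at $t=1/2$, the H\"older step, and the quasi-convexity bound all work exactly as you describe, and the constants collapse to $\dfrac{b-a}{4(p+1)^{1/p}}$ as claimed.

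The paper does not prove this theorem directly (it is quoted from \cite{ADK10}); instead it recovers the inequality as Corollary~6 by specializing the two-parameter Theorem~\ref{2.2} to $\lambda=\tfrac12$, $\theta=0$. That theorem rests on Lemma~\ref{1.1}, an identity that already expresses the left side as a sum of two integrals over $[0,1]$, with arguments $ta+(1-t)C$ and $tb+(1-t)C$ where $C=(1-\lambda)a+\lambda b$. Thus the ``splitting at the midpoint'' that you perform by hand is, in the paper's approach, built into the underlying lemma from the start; H\"older and the quasi-convexity bound are then applied to each piece over the whole of $[0,1]$ rather than over $[0,\tfrac12]$ and $[\tfrac12,1]$. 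Your route is more self-contained and avoids the auxiliary two-parameter machinery, while the paper's route has the advantage of yielding the entire $(\theta,\lambda)$-family of inequalities (Simpson, midpoint, trapezoid) in one stroke, with the present statement dropping out as a single specialization.
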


\begin{theorem}
Let $f:I^{\circ }\subset \mathbb{R\rightarrow R}$ be a differentiable
mapping on $I^{\circ },$ $a,b\in I^{\circ }$ with $a<b.$ If $\left\vert
f^{\prime }\right\vert ^{q}$ is an quasi-convex on $[a,b]$, for $q\geq 1,$
then the following inequality holds:%
\begin{eqnarray}
\left\vert \frac{f(a)+f(b)}{2}-\frac{1}{b-a}\dint\limits_{a}^{b}f(x)dx\right%
\vert &\leq &\frac{b-a}{8}\left[ \left( \sup \left\{ \left\vert f^{\prime }(%
\frac{a+b}{2})\right\vert ^{q},\left\vert f^{\prime }(b)\right\vert
^{q}\right\} \right) ^{\frac{1}{q}}\right.  \notag \\
&&\left. +\left( \sup \left\{ \left\vert f^{\prime }(\frac{a+b}{2}%
)\right\vert ^{q},\left\vert f^{\prime }(a)\right\vert ^{q}\right\} \right)
^{\frac{1}{q}}\right] .  \label{1-5}
\end{eqnarray}
\end{theorem}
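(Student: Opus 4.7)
The plan is to proceed from a standard integral identity that expresses the deviation of $\frac{f(a)+f(b)}{2}$ from the mean value of $f$ as a weighted integral of $f'$. Specifically, an integration by parts confirms the well-known representation
\begin{equation*}
\frac{f(a)+f(b)}{2}-\frac{1}{b-a}\int_{a}^{b}f(x)\,dx=\frac{b-a}{2}\int_{0}^{1}(1-2t)\,f'(ta+(1-t)b)\,dt.
\end{equation*}
Because the bound on the right of \eqref{1-5} features $f'\!\left(\frac{a+b}{2}\right)$ explicitly, I would split the interval of integration at $t=1/2$; the substitution $x=ta+(1-t)b$ shows that $t\in[0,1/2]$ corresponds to $x\in[(a+b)/2,b]$, while $t\in[1/2,1]$ corresponds to $x\in[a,(a+b)/2]$. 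This is exactly the splitting that will produce the two suprema appearing in \eqref{1-5}.

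Next, I would apply the triangle inequality and then the power-mean (Hölder) inequality with exponents $1-1/q$ and $1/q$ on each of the two sub-integrals. On $[0,1/2]$ this yields
\begin{equation*}
\int_{0}^{1/2}|1-2t|\,|f'(ta+(1-t)b)|\,dt\leq\left(\int_{0}^{1/2}|1-2t|\,dt\right)^{1-\frac{1}{q}}\!\left(\int_{0}^{1/2}|1-2t|\,|f'(ta+(1-t)b)|^{q}dt\right)^{\frac{1}{q}},
\end{equation*}
and analogously on $[1/2,1]$. The quasi-convexity of $|f'|^{q}$ applied on $[(a+b)/2,b]$ and on $[a,(a+b)/2]$ respectively gives the pointwise bounds
\begin{equation*}
|f'(ta+(1-t)b)|^{q}\leq \sup\!\left\{\left|f'\!\left(\tfrac{a+b}{2}\right)\right|^{q},|f'(b)|^{q}\right\},\qquad t\in[0,1/2],
\end{equation*}
with the symmetric statement for $t\in[1/2,1]$. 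Pulling these suprema out of their integrals reduces everything to the elementary computation $\int_{0}^{1/2}(1-2t)\,dt=\int_{1/2}^{1}(2t-1)\,dt=\tfrac{1}{4}$.

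Combining the pieces, the power-mean factors collapse to $(1/4)^{1-1/q}(1/4)^{1/q}=1/4$ on each half, and multiplying by the external $\frac{b-a}{2}$ produces exactly the constant $\frac{b-a}{8}$ in front of the sum of the two $\frac{1}{q}$-th powers of the suprema, which is the desired bound \eqref{1-5}. The identity and the elementary integrals are routine; the only conceptually delicate step is matching the right quasi-convex supremum to the right sub-interval, which is handled cleanly by the splitting at $t=1/2$ once the correspondence $t\in[0,1/2]\leftrightarrow x\in[(a+b)/2,b]$ is kept in mind.
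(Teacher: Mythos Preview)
Your argument is correct and follows essentially the same route as the paper. In the paper, inequality \eqref{1-5} is recovered as the special case $\lambda=\tfrac12$, $\theta=0$ of Theorem~\ref{2.1}, whose proof uses Lemma~\ref{1.1}, the power-mean inequality, quasi-convexity of $|f'|^{q}$, and the evaluation $\int_0^1|t-\theta|\,dt=\theta^2-\theta+\tfrac12$; your proof carries out the identical steps, the only cosmetic difference being that you start from the classical single-integral trapezoid identity and split it at $t=\tfrac12$, whereas Lemma~\ref{1.1} (specialized to $\lambda=\tfrac12$, $\theta=0$) already delivers the representation as two integrals with endpoints $a,\,\frac{a+b}{2}$ and $b,\,\frac{a+b}{2}$.
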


In this paper, in order to provide a unified approach to establish midpoint
inequality, trapezoid inequality and Simpson's inequality for functions
whose derivatives in absolute value at certain power are quasi-convex, we
need the following lemma given by Iscan in \cite{I12}:

\begin{lemma}
\label{1.1}Let $f:I\subset \mathbb{R\rightarrow R}$ be a differentiable
mapping on $I^{\circ }$ such that $f^{\prime }\in L[a,b]$, where $a,b\in I$
with $a<b$ and $\theta ,\lambda \in \left[ 0,1\right] $ , then the following
equality holds:%
\begin{eqnarray}
&&\left( 1-\theta \right) \left( \lambda f(a)+\left( 1-\lambda \right)
f(b)\right) +\theta f(\left( 1-\lambda \right) a+\lambda b)-\frac{1}{b-a}%
\dint\limits_{a}^{b}f(x)dx  \label{1-6} \\
&=&\left( b-a\right) \left[ -\lambda ^{2}\dint\limits_{0}^{1}\left( t-\theta
\right) f^{\prime }\left( ta+\left( 1-t\right) \left[ \left( 1-\lambda
\right) a+\lambda b\right] \right) dt\right.  \notag \\
&&\left. +\left( 1-\lambda \right) ^{2}\dint\limits_{0}^{1}\left( t-\theta
\right) f^{\prime }\left( tb+\left( 1-t\right) \left[ \left( 1-\lambda
\right) a+\lambda b\right] \right) dt\right] .  \notag
\end{eqnarray}
\end{lemma}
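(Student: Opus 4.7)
The plan is to verify the identity by working on the right-hand side and reducing it to the left-hand side via integration by parts. First I would introduce the abbreviation $c=(1-\lambda)a+\lambda b$ so that the two integrands become $f'(ta+(1-t)c)$ and $f'(tb+(1-t)c)$. The key observation is that the chain rule gives $\frac{d}{dt}f(ta+(1-t)c)=(a-c)f'(ta+(1-t)c)=-\lambda(b-a)f'(ta+(1-t)c)$, and similarly $\frac{d}{dt}f(tb+(1-t)c)=(b-c)f'(tb+(1-t)c)=(1-\lambda)(b-a)f'(tb+(1-t)c)$. So the denominators $\lambda(b-a)$ and $(1-\lambda)(b-a)$ that appear after integration by parts will cancel against the prefactors $-\lambda^{2}$ and $(1-\lambda)^{2}$, leaving clean multiples of $b-a$.

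Next I would integrate by parts in each $\int_{0}^{1}(t-\theta)f'(\cdots)\,dt$, taking $u=t-\theta$ and $dv=f'(\cdots)\,dt$. The boundary term from the first integral yields $\tfrac{-1}{\lambda(b-a)}[(1-\theta)f(a)+\theta f(c)]$, and from the second integral $\tfrac{1}{(1-\lambda)(b-a)}[(1-\theta)f(b)+\theta f(c)]$. The remaining $\int_{0}^{1}f(ta+(1-t)c)\,dt$ and $\int_{0}^{1}f(tb+(1-t)c)\,dt$ are then converted by the linear substitutions $x=ta+(1-t)c$ and $x=tb+(1-t)c$ into $\tfrac{1}{\lambda(b-a)}\int_{a}^{c}f(x)\,dx$ and $\tfrac{1}{(1-\lambda)(b-a)}\int_{c}^{b}f(x)\,dx$ respectively.

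Finally I would multiply the first computation by $-\lambda^{2}(b-a)$ and the second by $(1-\lambda)^{2}(b-a)$, then add. The $\theta f(c)$ contributions combine with weights $\lambda$ and $1-\lambda$ to give $\theta f(c)=\theta f((1-\lambda)a+\lambda b)$, the $f(a)$ and $f(b)$ terms assemble into $(1-\theta)(\lambda f(a)+(1-\lambda)f(b))$, and the two pieces $\int_{a}^{c}$ and $\int_{c}^{b}$ combine into $\int_{a}^{b}f(x)\,dx$ with coefficient $-\tfrac{1}{b-a}$, matching the claimed left-hand side exactly.

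The calculation is entirely mechanical once the cancellations are set up correctly; the only real obstacle is bookkeeping, namely keeping track of the signs arising from $a-c=-\lambda(b-a)$ versus $b-c=(1-\lambda)(b-a)$ and making sure the factors $\lambda^{2}$ and $(1-\lambda)^{2}$ collapse to the desired $\lambda$ and $1-\lambda$ weights after cancellation with the $\frac{1}{\lambda(b-a)}$ and $\frac{1}{(1-\lambda)(b-a)}$ produced by the chain rule. No convexity or regularity beyond $f'\in L[a,b]$ enters, so the two edge cases $\lambda=0$ and $\lambda=1$ (where one of the pieces degenerates and the identity becomes trivial) should be handled by a brief remark rather than a separate argument.
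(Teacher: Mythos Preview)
Your integration-by-parts computation is correct and is exactly the standard route to identities of this type; the cancellations you describe all go through as stated, and your remark about the degenerate cases $\lambda\in\{0,1\}$ is appropriate. Note, however, that the present paper does not actually supply a proof of this lemma: it is quoted from \cite{I12} and only used as a tool for Theorems~\ref{2.1} and~\ref{2.2}. There is therefore no in-paper argument to compare against, but your approach is precisely what one expects the cited proof to be.
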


\section{Main results}

\begin{theorem}
\label{2.1}Let $f:I\subset \mathbb{R\rightarrow R}$ be a differentiable
mapping on $I^{\circ }$ such that $f^{\prime }\in L[a,b]$, where $a,b\in
I^{\circ }$ with $a<b$ and $\alpha ,\lambda \in \left[ 0,1\right] $. If $%
\left\vert f^{\prime }\right\vert ^{q}$ is quasi-convex on $[a,b]$, $q\geq
1, $ then the following inequality holds:%
\begin{eqnarray}
&&\left\vert \left( 1-\theta \right) \left( \lambda f(a)+\left( 1-\lambda
\right) f(b)\right) +\theta f(\left( 1-\lambda \right) a+\lambda b)-\frac{1}{%
b-a}\dint\limits_{a}^{b}f(x)dx\right\vert  \notag \\
&\leq &\left( b-a\right) \left( \theta ^{2}-\theta +\frac{1}{2}\right) \left[
\lambda ^{2}\left( \sup \left\{ \left\vert f^{\prime }(a)\right\vert
^{q},\left\vert f^{\prime }(C)\right\vert ^{q}\right\} \right) ^{\frac{1}{q}%
}\right.  \notag \\
&&\left. +\left( 1-\lambda \right) ^{2}\left( \sup \left\{ \left\vert
f^{\prime }(b)\right\vert ^{q},\left\vert f^{\prime }(C)\right\vert
^{q}\right\} \right) ^{\frac{1}{q}}\right]  \label{2-1}
\end{eqnarray}%
where $C=\left( 1-\lambda \right) a+\lambda b$ $.$
\end{theorem}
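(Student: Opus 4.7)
The plan is to start from the integral identity in Lemma \ref{1.1}, take absolute values, and reduce the problem to estimating two integrals of the form $\int_{0}^{1}|t-\theta|\,|f'(\cdot)|\,dt$. More precisely, I would apply the triangle inequality to the right-hand side of (\ref{1-6}) to obtain
\begin{equation*}
\left|\mathrm{LHS}\right|\leq (b-a)\left[\lambda ^{2}\int_{0}^{1}|t-\theta|\,|f'(ta+(1-t)C)|\,dt+(1-\lambda)^{2}\int_{0}^{1}|t-\theta|\,|f'(tb+(1-t)C)|\,dt\right],
\end{equation*}
where $C=(1-\lambda)a+\lambda b$, and then handle each of the two integrals separately by the same method.

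For each integral I would invoke the power-mean (or H\"older) inequality with weight $|t-\theta|$, writing
\begin{equation*}
\int_{0}^{1}|t-\theta|\,|f'(ta+(1-t)C)|\,dt\leq \left(\int_{0}^{1}|t-\theta|\,dt\right)^{1-1/q}\left(\int_{0}^{1}|t-\theta|\,|f'(ta+(1-t)C)|^{q}\,dt\right)^{1/q}.
\end{equation*}
A direct split at $t=\theta$ gives the weight integral
\begin{equation*}
\int_{0}^{1}|t-\theta|\,dt=\frac{\theta^{2}}{2}+\frac{(1-\theta)^{2}}{2}=\theta^{2}-\theta+\frac{1}{2},
\end{equation*}
which is exactly the constant appearing in (\ref{2-1}). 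For the inner integrand, I would use that $ta+(1-t)C$ is a convex combination of $a$ and $C$, so quasi-convexity of $|f'|^{q}$ yields
\begin{equation*}
|f'(ta+(1-t)C)|^{q}\leq \sup\{|f'(a)|^{q},|f'(C)|^{q}\}.
\end{equation*}
Pulling this supremum out of the integral and combining with the weight integral above produces the factor $(\theta^{2}-\theta+\frac{1}{2})\,(\sup\{|f'(a)|^{q},|f'(C)|^{q}\})^{1/q}$.

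Applying the same argument to the second integral (with $b$ in place of $a$, using that $tb+(1-t)C$ is a convex combination of $b$ and $C$) yields the symmetric term involving $\sup\{|f'(b)|^{q},|f'(C)|^{q}\}$. Adding the two contributions with their respective weights $\lambda^{2}$ and $(1-\lambda)^{2}$ reproduces the right-hand side of (\ref{2-1}).

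I do not expect any serious obstacle: the structure of Lemma \ref{1.1} already matches the target bound, the weight integral is elementary, and the power-mean step is standard for this type of inequality. The only subtlety is handling the case $q=1$ uniformly, but the power-mean inequality degenerates correctly there (the first factor becomes $1$ and the estimate is obtained directly by pulling out the supremum), so the same chain of inequalities covers both $q=1$ and $q>1$.
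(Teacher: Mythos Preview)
Your proposal is correct and follows essentially the same route as the paper: apply Lemma~\ref{1.1}, take absolute values, use the power-mean inequality with weight $|t-\theta|$, compute $\int_0^1|t-\theta|\,dt=\theta^2-\theta+\tfrac12$, and bound $|f'|^q$ on each segment via quasi-convexity. Your remark on the $q=1$ degeneration is also in line with the paper's uniform treatment of $q\ge 1$.
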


\begin{proof}
Suppose that $q\geq 1$ and $C=\left( 1-\lambda \right) a+\lambda b.$ From
Lemma \ref{2.1} and using the well known power mean inequality, we have%
\begin{eqnarray*}
&&\left\vert \left( 1-\theta \right) \left( \lambda f(a)+\left( 1-\lambda
\right) f(b)\right) +\theta f(C)-\frac{1}{b-a}\dint\limits_{a}^{b}f(x)dx%
\right\vert \leq \left( b-a\right) \\
&&\left[ \lambda ^{2}\dint\limits_{0}^{1}\left\vert t-\theta \right\vert
\left\vert f^{\prime }\left( ta+\left( 1-t\right) C\right) \right\vert
dt+\left( 1-\lambda \right) ^{2}\dint\limits_{0}^{1}\left\vert t-\theta
\right\vert \left\vert f^{\prime }\left( tb+\left( 1-t\right) C\right)
\right\vert dt\right] \\
&\leq &\left( b-a\right) \left\{ \lambda ^{2}\left(
\dint\limits_{0}^{1}\left\vert t-\theta \right\vert dt\right) ^{1-\frac{1}{q}%
}\left( \dint\limits_{0}^{1}\left\vert t-\theta \right\vert \left\vert
f^{\prime }\left( ta+\left( 1-t\right) C\right) \right\vert ^{q}dt\right) ^{%
\frac{1}{q}}\right.
\end{eqnarray*}%
\begin{equation}
\left. +\left( 1-\lambda \right) ^{2}\left( \dint\limits_{0}^{1}\left\vert
t-\theta \right\vert dt\right) ^{1-\frac{1}{q}}\left(
\dint\limits_{0}^{1}\left\vert t-\theta \right\vert \left\vert f^{\prime
}\left( tb+\left( 1-t\right) C\right) \right\vert ^{q}dt\right) ^{\frac{1}{q}%
}\right\} .  \label{2-1a}
\end{equation}

Since $\left\vert f^{\prime }\right\vert ^{q}$ is quasi-convex on $[a,b],$
we know that for $t\in \left[ 0,1\right] $%
\begin{equation*}
\left\vert f^{\prime }\left( ta+(1-t)C\right) \right\vert ^{q}\leq \sup
\left\{ \left\vert f^{\prime }(a)\right\vert ^{q},\left\vert f^{\prime
}(C)\right\vert ^{q}\right\} ,
\end{equation*}%
and%
\begin{equation*}
\left\vert f^{\prime }\left( tb+\left( 1-t\right) C\right) \right\vert
^{q}\leq \sup \left\{ \left\vert f^{\prime }(a)\right\vert ^{q},\left\vert
f^{\prime }(C)\right\vert ^{q}\right\} .
\end{equation*}%
Hence, by simple computation%
\begin{equation}
\dint\limits_{0}^{1}\left\vert t-\theta \right\vert dt=\theta ^{2}-\theta +%
\frac{1}{2},  \label{2-1b}
\end{equation}%
\begin{equation}
\dint\limits_{0}^{1}\left\vert t-\theta \right\vert \left\vert f^{\prime
}\left( ta+\left( 1-t\right) C\right) \right\vert ^{q}dt=\left( \theta
^{2}-\theta +\frac{1}{2}\right) \sup \left\{ \left\vert f^{\prime
}(a)\right\vert ^{q},\left\vert f^{\prime }(C)\right\vert ^{q}\right\} ,
\label{2-1c}
\end{equation}%
and%
\begin{equation}
\dint\limits_{0}^{1}\left\vert t-\theta \right\vert \left\vert f^{\prime
}\left( tb+\left( 1-t\right) C\right) \right\vert ^{q}dt=\left( \theta
^{2}-\theta +\frac{1}{2}\right) \sup \left\{ \left\vert f^{\prime
}(b)\right\vert ^{q},\left\vert f^{\prime }(C)\right\vert ^{q}\right\} .
\label{2-1d}
\end{equation}%
Thus, using (\ref{2-1b})-(\ref{2-1d}) in (\ref{2-1a}), we obtain the
inequality (\ref{2-1}). This completes the proof.
\end{proof}

\begin{corollary}
Under the assumptions of Theorem \ref{2.1} with $q=1,$ the inequality (\ref%
{2-1}) reduced to the following inequality%
\begin{eqnarray*}
&&\left\vert \left( 1-\theta \right) \left( \lambda f(a)+\left( 1-\lambda
\right) f(b)\right) +\theta f(\left( 1-\lambda \right) a+\lambda b)-\frac{1}{%
b-a}\dint\limits_{a}^{b}f(x)dx\right\vert \\
&\leq &\left( b-a\right) \left( \theta ^{2}-\theta +\frac{1}{2}\right) \left[
\lambda ^{2}\sup \left\{ \left\vert f^{\prime }(a)\right\vert ,\left\vert
f^{\prime }(C)\right\vert \right\} \right. \\
&&\left. +\left( 1-\lambda \right) ^{2}\sup \left\{ \left\vert f^{\prime
}(b)\right\vert ,\left\vert f^{\prime }(C)\right\vert \right\} \right] .
\end{eqnarray*}
\end{corollary}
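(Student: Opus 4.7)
The plan is to obtain the corollary as a direct specialization of Theorem \ref{2.1}, since for $q=1$ the power-mean step in the proof of Theorem \ref{2.1} becomes trivial and the $q$-th root on the right-hand side of (\ref{2-1}) disappears. Concretely, I would simply substitute $q=1$ into the statement of Theorem \ref{2.1} and observe that
\[
\left(\sup\left\{\left\vert f^{\prime}(a)\right\vert^{q},\left\vert f^{\prime}(C)\right\vert^{q}\right\}\right)^{\frac{1}{q}}
=\sup\left\{\left\vert f^{\prime}(a)\right\vert,\left\vert f^{\prime}(C)\right\vert\right\},
\]
and likewise with $a$ replaced by $b$. Plugging these identities into the right-hand side of (\ref{2-1}) immediately yields the claimed inequality of the corollary.

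I would also note, for completeness, that one does not need to re-invoke the power-mean inequality when $q=1$: the exponent $1-1/q$ in (\ref{2-1a}) vanishes, so the bound reduces to
\[
(b-a)\left[\lambda^{2}\int_{0}^{1}\left\vert t-\theta\right\vert\left\vert f^{\prime}\left(ta+(1-t)C\right)\right\vert dt
+(1-\lambda)^{2}\int_{0}^{1}\left\vert t-\theta\right\vert\left\vert f^{\prime}\left(tb+(1-t)C\right)\right\vert dt\right],
\]
and the quasi-convexity of $\left\vert f^{\prime}\right\vert$ together with the computation in (\ref{2-1b}) gives the stated estimate with the factor $\theta^{2}-\theta+\tfrac{1}{2}$. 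Since the corollary is presented as a consequence of Theorem \ref{2.1}, the cleaner route is the former: just set $q=1$ in (\ref{2-1}). There is essentially no obstacle here; the only thing to check is the trivial algebraic simplification of the $q$-th root, so the entire proof can be written in one or two lines.
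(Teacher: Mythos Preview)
Your proposal is correct and matches the paper's treatment: the corollary is stated without a separate proof, being an immediate specialization of Theorem \ref{2.1} obtained by setting $q=1$ in inequality (\ref{2-1}). Your observation that the $q$-th root trivially simplifies is exactly the one-line justification needed.
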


\begin{corollary}
Under the assumptions of Theorem \ref{2.1} with $\lambda =\frac{1}{2}$ and $%
\theta =\frac{2}{3}$, from the inequality (\ref{2-1}) we get the following
Simpson type inequality 
\begin{eqnarray*}
&&\left\vert \frac{1}{6}\left[ f(a)+4f\left( \frac{a+b}{2}\right) +f(b)%
\right] -\frac{1}{b-a}\dint\limits_{a}^{b}f(x)dx\right\vert \\
&\leq &\left( b-a\right) \left( \frac{5}{72}\right) \left[ \left( \sup
\left\{ \left\vert f^{\prime }(a)\right\vert ^{q},\left\vert f^{\prime
}\left( \frac{a+b}{2}\right) \right\vert ^{q}\right\} \right) ^{\frac{1}{q}%
}+\left( \sup \left\{ \left\vert f^{\prime }(b)\right\vert ^{q},\left\vert
f^{\prime }\left( \frac{a+b}{2}\right) \right\vert ^{q}\right\} \right) ^{%
\frac{1}{q}}\right] .
\end{eqnarray*}
\end{corollary}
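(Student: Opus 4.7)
The plan is simply to specialize Theorem \ref{2.1} at $\lambda=\tfrac12$ and $\theta=\tfrac23$, since the corollary is stated as an immediate consequence. No new inequality-tool is needed; everything reduces to arithmetic on the three quantities that appear in (\ref{2-1}): the linear combination on the left, the factor $\theta^2-\theta+\tfrac12$, and the weights $\lambda^2$ and $(1-\lambda)^2$.

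First I would record that with $\lambda=\tfrac12$ we have $C=(1-\lambda)a+\lambda b=\tfrac{a+b}{2}$, so the suprema on the right-hand side of (\ref{2-1}) are exactly those appearing in the claimed Simpson-type bound. Next I would evaluate the left-hand combination: with $\theta=\tfrac23$,
\[
(1-\theta)\bigl(\lambda f(a)+(1-\lambda)f(b)\bigr)+\theta f(C)
=\tfrac{1}{3}\cdot\tfrac{f(a)+f(b)}{2}+\tfrac{2}{3}f\!\left(\tfrac{a+b}{2}\right)
=\tfrac{1}{6}\!\left[f(a)+4f\!\left(\tfrac{a+b}{2}\right)+f(b)\right],
\]
which matches the expression inside the absolute value on the left of the corollary.

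For the constant, I would compute
\[
\theta^2-\theta+\tfrac12=\tfrac{4}{9}-\tfrac{2}{3}+\tfrac12=\tfrac{8-12+9}{18}=\tfrac{5}{18},
\]
and note $\lambda^2=(1-\lambda)^2=\tfrac14$, so the coefficient multiplying each supremum term in (\ref{2-1}) becomes $(b-a)\cdot\tfrac{5}{18}\cdot\tfrac14=(b-a)\cdot\tfrac{5}{72}$. Substituting these values into (\ref{2-1}) yields exactly the inequality stated in the corollary, which completes the proof.

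There is no real obstacle here; the only point that needs a moment's care is the bookkeeping between the abstract parameter $\theta$ in Theorem \ref{2.1} (which the statement of the theorem inadvertently also calls $\alpha$) and the concrete value $\theta=\tfrac23$ used in the Simpson weighting, together with checking that $\tfrac{4}{9}-\tfrac23+\tfrac12=\tfrac{5}{18}$ so that the final constant is $\tfrac{5}{72}$ rather than something off by a small factor.
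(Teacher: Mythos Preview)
Your proposal is correct and matches the paper's approach exactly: the corollary is stated without proof in the paper, being an immediate specialization of Theorem \ref{2.1} at $\lambda=\tfrac12$, $\theta=\tfrac23$, and your arithmetic verifying $C=\tfrac{a+b}{2}$, the Simpson combination on the left, and the constant $\tfrac{5}{18}\cdot\tfrac14=\tfrac{5}{72}$ is all correct.
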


\begin{corollary}
Under the assumptions of Theorem \ref{2.1} with $\lambda =\frac{1}{2}$ and $%
\theta =1,$from the inequality (\ref{2-1}) we get the following midpoint
inequality%
\begin{eqnarray*}
&&\left\vert f\left( \frac{a+b}{2}\right) -\frac{1}{b-a}\dint%
\limits_{a}^{b}f(x)dx\right\vert \\
&\leq &\frac{b-a}{8}\left[ \left( \sup \left\{ \left\vert f^{\prime
}(a)\right\vert ^{q},\left\vert f^{\prime }\left( \frac{a+b}{2}\right)
\right\vert ^{q}\right\} \right) ^{\frac{1}{q}}+\left( \sup \left\{
\left\vert f^{\prime }(b)\right\vert ^{q},\left\vert f^{\prime }\left( \frac{%
a+b}{2}\right) \right\vert ^{q}\right\} \right) ^{\frac{1}{q}}\right] .
\end{eqnarray*}
\end{corollary}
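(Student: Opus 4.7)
The plan is to obtain this corollary as a direct specialization of Theorem~\ref{2.1}: I would simply substitute $\lambda = 1/2$ and $\theta = 1$ into the inequality (\ref{2-1}) and simplify both sides. Since Theorem~\ref{2.1} has already been established, no additional analytic work is required; the task is purely algebraic bookkeeping.

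First I would evaluate the left-hand side of (\ref{2-1}) at these parameter values. The factor $1-\theta = 0$ kills the term $\lambda f(a) + (1-\lambda) f(b)$, while $\theta = 1$ leaves $f(C)$ with $C = (1-\tfrac{1}{2})a + \tfrac{1}{2}b = \tfrac{a+b}{2}$. Thus the left-hand side collapses to $\left|f\bigl(\tfrac{a+b}{2}\bigr) - \tfrac{1}{b-a}\int_a^b f(x)\,dx\right|$, which matches the statement.

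Next I would evaluate the scalar prefactor on the right-hand side. With $\theta = 1$, the quantity $\theta^2 - \theta + \tfrac{1}{2}$ equals $\tfrac{1}{2}$; with $\lambda = 1/2$, both $\lambda^2$ and $(1-\lambda)^2$ equal $\tfrac{1}{4}$. Multiplying gives the common factor $(b-a)\cdot \tfrac{1}{2}\cdot \tfrac{1}{4} = \tfrac{b-a}{8}$ in front of each supremum term. Substituting $C = \tfrac{a+b}{2}$ inside the suprema then yields exactly the right-hand side of the claimed midpoint inequality.

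There is no real obstacle here, only the need to be careful that $\lambda = 1/2$ lies in $[0,1]$ and $\theta = 1$ lies in $[0,1]$, so the hypotheses of Theorem~\ref{2.1} apply. Once that is noted, the corollary follows immediately from the substitution above, and the proof can be written in a single sentence.
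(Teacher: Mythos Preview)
Your proposal is correct and matches the paper's approach exactly: the corollary is stated without a separate proof, being an immediate consequence of substituting $\lambda=\tfrac{1}{2}$ and $\theta=1$ into inequality~(\ref{2-1}) and simplifying. Your computations of the left-hand side, of $\theta^{2}-\theta+\tfrac{1}{2}=\tfrac{1}{2}$, and of $\lambda^{2}=(1-\lambda)^{2}=\tfrac{1}{4}$ are all correct, yielding the stated constant $\tfrac{b-a}{8}$.
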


\begin{corollary}
Under the assumptions of Theorem \ref{2.1} with $\lambda =\frac{1}{2}$ and $%
\theta =0,$from the inequality (\ref{2-1}) we get the following trapezoid
inequality%
\begin{eqnarray*}
&&\left\vert \frac{f\left( a\right) +f\left( b\right) }{2}-\frac{1}{b-a}%
\dint\limits_{a}^{b}f(x)dx\right\vert \\
&\leq &\frac{b-a}{8}\left[ \left( \sup \left\{ \left\vert f^{\prime
}(a)\right\vert ^{q},\left\vert f^{\prime }\left( \frac{a+b}{2}\right)
\right\vert ^{q}\right\} \right) ^{\frac{1}{q}}+\left( \sup \left\{
\left\vert f^{\prime }(b)\right\vert ^{q},\left\vert f^{\prime }\left( \frac{%
a+b}{2}\right) \right\vert ^{q}\right\} \right) ^{\frac{1}{q}}\right] .
\end{eqnarray*}%
which is the same of the inequality (\ref{1-5}).
\end{corollary}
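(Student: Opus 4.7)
The plan is to obtain this corollary by a direct specialization of Theorem \ref{2.1} at $\lambda = \tfrac{1}{2}$ and $\theta = 0$; no new analytic input is required, and the argument amounts to a bookkeeping of the constants already produced in the proof of (\ref{2-1}).

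First I would simplify the left-hand side of (\ref{2-1}). Setting $\theta = 0$ eliminates the $f(C)$ term and leaves unit weight on the convex combination $\lambda f(a) + (1-\lambda)f(b)$, which at $\lambda = \tfrac{1}{2}$ is precisely $\tfrac{f(a)+f(b)}{2}$. This already produces the trapezoid functional on the left-hand side of the claim, with the integral average intact.

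Next I would evaluate the numerical factors on the right-hand side. The quadratic prefactor $\theta^{2}-\theta+\tfrac{1}{2}$ takes the value $\tfrac{1}{2}$ at $\theta=0$, while both $\lambda^{2}$ and $(1-\lambda)^{2}$ equal $\tfrac{1}{4}$ at $\lambda = \tfrac{1}{2}$. Combining these with the outer $(b-a)$ gives the coefficient $\tfrac{b-a}{8}$ multiplying each of the two supremum terms. Under the same choice of $\lambda$, the point $C=(1-\lambda)a+\lambda b$ becomes the midpoint $\tfrac{a+b}{2}$, so $\lvert f'(C)\rvert^{q} = \bigl\lvert f'(\tfrac{a+b}{2})\bigr\rvert^{q}$ in each supremum, matching the form stated in the corollary.

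I do not anticipate any genuine obstacle: once Theorem \ref{2.1} is available, this corollary is obtained by a single substitution of parameter values. The only point worth checking for the concluding sentence is that the bound thus produced coincides with (\ref{1-5}); this reduces to observing that the two arguments inside each supremum may be reordered without change, so the expression we derive is literally identical to the one in (\ref{1-5}).
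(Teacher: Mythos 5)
Your proposal is correct and is exactly the substitution the paper intends: $\theta=0$ gives the prefactor $\theta^{2}-\theta+\tfrac{1}{2}=\tfrac{1}{2}$, $\lambda=\tfrac{1}{2}$ gives $\lambda^{2}=(1-\lambda)^{2}=\tfrac{1}{4}$ and $C=\tfrac{a+b}{2}$, yielding the coefficient $\tfrac{b-a}{8}$ and the stated suprema. Your closing observation that agreement with (\ref{1-5}) is just a reordering of the arguments inside each supremum is also right.
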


Using Lemma \ref{1.1} we shall give another result for convex functions as
follows.

\begin{theorem}
\label{2.2}Let $f:I\subset \mathbb{R\rightarrow R}$ be a differentiable
mapping on $I^{\circ }$ such that $f^{\prime }\in L[a,b]$, where $a,b\in
I^{\circ }$ with $a<b$ and $\alpha ,\lambda \in \left[ 0,1\right] $. If $%
\left\vert f^{\prime }\right\vert ^{q}$ is quasi-convex on $[a,b]$, $q>1,$
then the following inequality holds:%
\begin{eqnarray}
&&\left\vert \left( 1-\theta \right) \left( \lambda f(a)+\left( 1-\lambda
\right) f(b)\right) +\theta f(\left( 1-\lambda \right) a+\lambda b)-\frac{1}{%
b-a}\dint\limits_{a}^{b}f(x)dx\right\vert  \label{2-2} \\
&\leq &\left( b-a\right) \left( \frac{\theta ^{p+1}+\left( 1-\theta \right)
^{p+1}}{p+1}\right) ^{\frac{1}{p}}\left[ \lambda ^{2}\left( \sup \left\{
\left\vert f^{\prime }(a)\right\vert ^{q},\left\vert f^{\prime
}(C)\right\vert ^{q}\right\} \right) ^{\frac{1}{q}}\right.  \notag \\
&&\left. +\left( 1-\lambda \right) ^{2}\left( \sup \left\{ \left\vert
f^{\prime }(b)\right\vert ^{q},\left\vert f^{\prime }(C)\right\vert
^{q}\right\} \right) ^{\frac{1}{q}}\right]  \notag
\end{eqnarray}%
where $C=\left( 1-\lambda \right) a+\lambda b$ and $\frac{1}{p}+\frac{1}{q}%
=1.$
\end{theorem}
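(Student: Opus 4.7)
The proof should follow the same opening as Theorem \ref{2.1}: start from Lemma \ref{1.1}, take absolute values inside the integrals, and factor out $\lambda^2$ and $(1-\lambda)^2$ to isolate the two integrals
$$I_1 = \int_0^1 |t-\theta|\,|f'(ta+(1-t)C)|\,dt, \qquad I_2 = \int_0^1 |t-\theta|\,|f'(tb+(1-t)C)|\,dt.$$
The key difference from Theorem \ref{2.1} is that instead of invoking the power mean inequality with weight $|t-\theta|$, I would apply the classical H\"older inequality with conjugate exponents $p,q$ satisfying $1/p+1/q=1$. This gives
$$I_1 \leq \left(\int_0^1 |t-\theta|^p\,dt\right)^{1/p}\left(\int_0^1 |f'(ta+(1-t)C)|^q\,dt\right)^{1/q},$$
and an analogous bound for $I_2$.

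The next step is to evaluate $\int_0^1 |t-\theta|^p\,dt$ by splitting the integral at $t=\theta$, which yields
$$\int_0^1 |t-\theta|^p\,dt = \int_0^\theta (\theta-t)^p\,dt + \int_\theta^1 (t-\theta)^p\,dt = \frac{\theta^{p+1}+(1-\theta)^{p+1}}{p+1}.$$
For the $q$-power integrals of $|f'|$, I would use the quasi-convexity of $|f'|^q$ exactly as in the proof of Theorem \ref{2.1}: since $ta+(1-t)C$ is a convex combination of $a$ and $C$ for every $t\in[0,1]$, one has $|f'(ta+(1-t)C)|^q \leq \sup\{|f'(a)|^q,|f'(C)|^q\}$, and the supremum is a constant with respect to $t$, so integrating over $[0,1]$ simply reproduces that supremum. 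The same argument handles the integral with $tb+(1-t)C$, producing $\sup\{|f'(b)|^q,|f'(C)|^q\}$.

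Substituting these three computations into the H\"older estimate and recombining with the prefactors $\lambda^2$ and $(1-\lambda)^2$ gives precisely (\ref{2-2}). I don't expect any real obstacle: the only thing to be slightly careful about is verifying that $\int_0^1 |t-\theta|^p\,dt$ is correctly computed at the endpoints $\theta = 0$ and $\theta = 1$ (where one of the two sub-intervals collapses), and confirming that the quasi-convexity bound does not depend on $t$, so that the inner $L^q$-norm reduces to a pure supremum rather than a weighted integral as in the Theorem \ref{2.1} case.
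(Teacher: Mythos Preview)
Your proposal is correct and follows essentially the same approach as the paper: apply Lemma \ref{1.1}, take absolute values, use H\"older's inequality with exponents $p,q$, compute $\int_0^1 |t-\theta|^p\,dt=\frac{\theta^{p+1}+(1-\theta)^{p+1}}{p+1}$, and bound the $q$-integrals by the suprema via quasi-convexity. The paper's argument is identical step for step.
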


\begin{proof}
Suppose that $C=\left( 1-\lambda \right) a+\lambda b.$ From Lemma \ref{2.1}
and by H\"{o}lder's integral inequality, we have%
\begin{eqnarray*}
&&\left\vert \left( 1-\theta \right) \left( \lambda f(a)+\left( 1-\lambda
\right) f(b)\right) +\theta f(C)-\frac{1}{b-a}\dint\limits_{a}^{b}f(x)dx%
\right\vert \leq \left( b-a\right) \\
&&\left[ \lambda ^{2}\dint\limits_{0}^{1}\left\vert t-\theta \right\vert
\left\vert f^{\prime }\left( ta+\left( 1-t\right) C\right) \right\vert
dt+\left( 1-\lambda \right) ^{2}\dint\limits_{0}^{1}\left\vert t-\theta
\right\vert \left\vert f^{\prime }\left( tb+\left( 1-t\right) C\right)
\right\vert dt\right] \\
&\leq &\left( b-a\right) \left\{ \lambda ^{2}\left(
\dint\limits_{0}^{1}\left\vert t-\theta \right\vert ^{p}dt\right) ^{\frac{1}{%
p}}\left( \dint\limits_{0}^{1}\left\vert f^{\prime }\left( ta+\left(
1-t\right) C\right) \right\vert ^{q}dt\right) ^{\frac{1}{q}}\right.
\end{eqnarray*}%
\begin{equation}
\left. +\left( 1-\lambda \right) ^{2}\left( \dint\limits_{0}^{1}\left\vert
t-\theta \right\vert ^{p}dt\right) ^{\frac{1}{p}}\left(
\dint\limits_{0}^{1}\left\vert f^{\prime }\left( tb+\left( 1-t\right)
C\right) \right\vert ^{q}dt\right) ^{\frac{1}{q}}\right\} .  \label{2-2a}
\end{equation}%
Since $\left\vert f^{\prime }\right\vert ^{q}$ is quasi-convex on $[a,b]$,
we get 
\begin{equation}
\dint\limits_{0}^{1}\left\vert f^{\prime }\left( ta+\left( 1-t\right)
C\right) \right\vert ^{q}dt=\sup \left\{ \left\vert f^{\prime
}(a)\right\vert ^{q},\left\vert f^{\prime }(C)\right\vert ^{q}\right\}
\label{2-2b}
\end{equation}%
Similarly, 
\begin{equation}
\dint\limits_{0}^{1}\left\vert f^{\prime }\left( tb+\left( 1-t\right)
C\right) \right\vert ^{q}dt=\sup \left\{ \left\vert f^{\prime
}(b)\right\vert ^{q},\left\vert f^{\prime }(C)\right\vert ^{q}\right\} .
\label{2-2c}
\end{equation}%
By simple computation%
\begin{equation}
\dint\limits_{0}^{1}\left\vert t-\theta \right\vert ^{p}dt=\frac{\theta
^{p+1}+\left( 1-\theta \right) ^{p+1}}{p+1},  \label{2-2d}
\end{equation}%
thus, using (\ref{2-2b})-(\ref{2-2d}) in (\ref{2-2a}), we obtain the
inequality (\ref{2-2}). This completes the proof.
\end{proof}

\begin{corollary}
Under the assumptions of Theorem \ref{2.2} with $\lambda =\frac{1}{2}$ and $%
\theta =\frac{2}{3}$, from the inequality (\ref{2-2}) we get the following
Simpson type inequality 
\begin{equation*}
\left\vert \frac{1}{6}\left[ f(a)+4f\left( \frac{a+b}{2}\right) +f(b)\right]
-\frac{1}{b-a}\dint\limits_{a}^{b}f(x)dx\right\vert
\end{equation*}%
\begin{eqnarray*}
&\leq &\frac{b-a}{12}\left( \frac{1+2^{p+1}}{3\left( p+1\right) }\right) ^{%
\frac{1}{p}}\left\{ \left( \sup \left\{ \left\vert f^{\prime }\left( \frac{%
a+b}{2}\right) \right\vert ^{q},\left\vert f^{\prime }(a)\right\vert
^{q}\right\} \right) ^{\frac{1}{q}}\right. \\
&&\left. +\left( \sup \left\{ \left\vert f^{\prime }\left( \frac{a+b}{2}%
\right) \right\vert ^{q},\left\vert f^{\prime }(b)\right\vert ^{q}\right\}
\right) ^{\frac{1}{q}}\right\} .
\end{eqnarray*}
\end{corollary}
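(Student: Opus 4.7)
The plan is straightforward: I would specialize inequality~(\ref{2-2}) of Theorem~\ref{2.2} to $\lambda = 1/2$ and $\theta = 2/3$ and then simplify. First, with $\lambda = 1/2$ the point $C = (1-\lambda)a + \lambda b$ reduces to $(a+b)/2$, so the supremum terms on the right of (\ref{2-2}) already match the supremum terms in the claimed inequality without further manipulation.

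Next I would rewrite the linear combination on the left-hand side. Substituting $\lambda = 1/2$ and $\theta = 2/3$,
\[
(1-\theta)\bigl(\lambda f(a) + (1-\lambda) f(b)\bigr) + \theta f(C) = \frac{1}{3}\cdot\frac{f(a)+f(b)}{2} + \frac{2}{3}\, f\!\left(\frac{a+b}{2}\right),
\]
which regroups to $\frac{1}{6}\bigl[f(a) + 4 f((a+b)/2) + f(b)\bigr]$, the Simpson combination appearing on the left of the corollary.

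It then remains to simplify the numerical prefactor on the right-hand side of (\ref{2-2}). With $\theta = 2/3$,
\[
\left(\frac{\theta^{p+1}+(1-\theta)^{p+1}}{p+1}\right)^{1/p} = \left(\frac{2^{p+1}+1}{3^{p+1}(p+1)}\right)^{1/p} = \frac{1}{3}\left(\frac{1+2^{p+1}}{3(p+1)}\right)^{1/p},
\]
where I factor out one copy of $1/3$ using $3^{p+1} = 3\cdot 3^p$. Since $\lambda^2 = (1-\lambda)^2 = 1/4$, the outer prefactor $(b-a)\,\lambda^2 \cdot (\,\cdot\,)^{1/p}$ becomes $(b-a)/12 \cdot ((1+2^{p+1})/(3(p+1)))^{1/p}$, which matches the stated constant exactly.

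There is no real obstacle; the argument is pure substitution and arithmetic once Theorem~\ref{2.2} is in hand. The only mildly subtle step is extracting the factor $1/3$ from the $p$-th root in order to present the constant in the compact form $(b-a)/12 \cdot ((1+2^{p+1})/(3(p+1)))^{1/p}$ rather than the equivalent but less clean $(b-a)/4 \cdot ((1+2^{p+1})/(3^{p+1}(p+1)))^{1/p}$.
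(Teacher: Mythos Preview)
Your proposal is correct and is exactly the approach the paper intends: the corollary is stated in the paper without an explicit proof, simply as the specialization of inequality~(\ref{2-2}) to $\lambda=\tfrac12$, $\theta=\tfrac23$, and your substitution and arithmetic (including the extraction of the factor $1/3$ from the $p$-th root) are all correct.
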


\begin{corollary}
Under the assumptions of Theorem \ref{2.2}with $\lambda =\frac{1}{2}$ and $%
\theta =0$, from the inequality (\ref{2-2}) we get the following trapezoid
inequality%
\begin{eqnarray*}
\left\vert \frac{f(a)+f(b)}{2}-\frac{1}{b-a}\dint\limits_{a}^{b}f(x)dx\right%
\vert &\leq &\frac{b-a}{4\left( p+1\right) ^{1/p}}\left[ \left\{ \left( \sup
\left\{ \left\vert f^{\prime }\left( \frac{a+b}{2}\right) \right\vert
^{q},\left\vert f^{\prime }(a)\right\vert ^{q}\right\} \right) ^{\frac{1}{q}%
}\right. \right. \\
&&\left. +\left\{ \left( \sup \left\{ \left\vert f^{\prime }\left( \frac{a+b%
}{2}\right) \right\vert ^{q},\left\vert f^{\prime }(b)\right\vert
^{q}\right\} \right) ^{\frac{1}{q}}\right. \right] .
\end{eqnarray*}%
which is the same of the inequality (\ref{1-4}).
\end{corollary}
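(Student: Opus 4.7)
The plan is to obtain this corollary as a direct specialization of Theorem \ref{2.2}, substituting $\lambda=\tfrac12$ and $\theta=0$ into inequality (\ref{2-2}), with no new analytic work required. Since the statement asserts that the resulting bound coincides with (\ref{1-4}), the proposal reduces to checking that both sides of (\ref{2-2}) simplify correctly under the chosen parameters and that the two expressions agree.

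First I would compute the left-hand side of (\ref{2-2}) under the substitution. With $\lambda=\tfrac12$ we have $C=(1-\lambda)a+\lambda b=\tfrac{a+b}{2}$, and the prefactor $(1-\theta)(\lambda f(a)+(1-\lambda)f(b))+\theta f(C)$ collapses, at $\theta=0$, to $\tfrac{f(a)+f(b)}{2}$. Thus the entire left-hand side becomes
\[
\left|\frac{f(a)+f(b)}{2}-\frac{1}{b-a}\int_a^b f(x)\,dx\right|,
\]
which matches the left-hand side of the claimed trapezoid inequality.

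Next I would simplify the right-hand side of (\ref{2-2}). The coefficient $\left(\frac{\theta^{p+1}+(1-\theta)^{p+1}}{p+1}\right)^{1/p}$ reduces, at $\theta=0$, to $\frac{1}{(p+1)^{1/p}}$; the factors $\lambda^2$ and $(1-\lambda)^2$ both equal $\tfrac14$; and $C=\tfrac{a+b}{2}$ inside the suprema. Combining these with the factor $(b-a)$ gives exactly
\[
\frac{b-a}{4(p+1)^{1/p}}\left[\left(\sup\{|f'(\tfrac{a+b}{2})|^q,|f'(a)|^q\}\right)^{1/q}+\left(\sup\{|f'(\tfrac{a+b}{2})|^q,|f'(b)|^q\}\right)^{1/q}\right],
\]
which is the desired bound.

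The only remaining point is to note that this expression is literally the right-hand side of (\ref{1-4}): in Alomari et al.'s statement the two suprema are written with $|f'(b)|$ first and $|f'(a)|$ first respectively, but suprema are symmetric in their arguments, so the identification is immediate. There is no genuine obstacle here; the entire argument is a bookkeeping verification, and the trickiest sub-step is merely confirming that $\theta^{p+1}+(1-\theta)^{p+1}$ collapses to $1$ at $\theta=0$, which it plainly does.
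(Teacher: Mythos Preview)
Your proposal is correct and matches the paper's approach: the corollary is obtained by direct substitution of $\lambda=\tfrac12$ and $\theta=0$ into (\ref{2-2}), and the paper offers no separate proof beyond this implicit specialization. The one small point you might add for completeness when identifying the result with (\ref{1-4}) is that the exponent $\tfrac{p}{p-1}$ there equals the $q$ of Theorem~\ref{2.2} (since $\tfrac1p+\tfrac1q=1$), so that $(p-1)/p=1/q$; otherwise your verification is complete.
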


\begin{corollary}
Under the assumptions of Theorem \ref{2.2} with $\lambda =\frac{1}{2}$ and $%
\theta =1$, from the inequality (\ref{2-2}) we get the following midpoint
inequality%
\begin{eqnarray*}
\left\vert f\left( \frac{a+b}{2}\right) -\frac{1}{b-a}\dint%
\limits_{a}^{b}f(x)dx\right\vert &\leq &\frac{b-a}{4\left( p+1\right) ^{1/p}}%
\left[ \left\{ \left( \sup \left\{ \left\vert f^{\prime }\left( \frac{a+b}{2}%
\right) \right\vert ^{q},\left\vert f^{\prime }(a)\right\vert ^{q}\right\}
\right) ^{\frac{1}{q}}\right. \right. \\
&&\left. +\left\{ \left( \sup \left\{ \left\vert f^{\prime }\left( \frac{a+b%
}{2}\right) \right\vert ^{q},\left\vert f^{\prime }(b)\right\vert
^{q}\right\} \right) ^{\frac{1}{q}}\right. \right] ,
\end{eqnarray*}%
which is the better than the inequality in \cite[Corollary 8]{AD10}.
\end{corollary}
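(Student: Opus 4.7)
The plan is entirely mechanical: this corollary is a direct specialization of Theorem \ref{2.2}, so the strategy is to substitute $\lambda = \tfrac{1}{2}$ and $\theta = 1$ into inequality (\ref{2-2}) and simplify both sides. On the left-hand side, the factor $1-\theta$ annihilates the term $\lambda f(a) + (1-\lambda) f(b)$, while $\theta f(C) = f(C)$ with $C = (1-\lambda)a + \lambda b = \tfrac{a+b}{2}$; thus the left-hand side collapses to exactly $\bigl|f(\tfrac{a+b}{2}) - \tfrac{1}{b-a}\int_a^b f(x)\,dx\bigr|$, matching the stated expression.

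On the right-hand side, I would evaluate each numerical factor in turn. With $\theta = 1$, the quantity $\tfrac{\theta^{p+1} + (1-\theta)^{p+1}}{p+1}$ becomes $\tfrac{1}{p+1}$, whose $1/p$-th power is $(p+1)^{-1/p}$. Since $\lambda^2 = (1-\lambda)^2 = \tfrac{1}{4}$, I pull this common factor out of the bracket and combine it with $b-a$ and $(p+1)^{-1/p}$ to obtain the prefactor $\tfrac{b-a}{4(p+1)^{1/p}}$. The bracket then reduces to the sum of two supremum terms involving $|f'(a)|^q$, $|f'(b)|^q$, and $|f'(\tfrac{a+b}{2})|^q$, reproducing the displayed inequality verbatim.

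There is no genuine obstacle in the argument: every step is routine bookkeeping once Theorem \ref{2.2} is in hand. The only non-mechanical claim attached to the corollary is the assertion that the resulting bound is \emph{better} than the one in \cite[Corollary 8]{AD10}; this is an auxiliary numerical comparison between the constant appearing here and its counterpart in \cite{AD10}, which I would verify by a direct side-by-side computation after the inequality itself has been established.
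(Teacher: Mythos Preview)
Your proposal is correct and matches the paper's approach exactly: the corollary is stated in the paper without a separate proof, as an immediate specialization of Theorem \ref{2.2} under the substitution $\lambda=\tfrac12$, $\theta=1$, and your step-by-step simplification of both sides of (\ref{2-2}) is precisely that routine verification.
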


\section{Some applications for special means}

Let us recall the following special means of arbitrary real numbers $a,b$
with $a\neq b$ and $\alpha \in \left[ 0,1\right] :$

\begin{enumerate}
\item The weighted arithmetic mean%
\begin{equation*}
A_{\alpha }\left( a,b\right) :=\alpha a+(1-\alpha )b,~a,b\in 
\mathbb{R}
.
\end{equation*}

\item The unweighted arithmetic mean%
\begin{equation*}
A\left( a,b\right) :=\frac{a+b}{2},~a,b\in 
\mathbb{R}
.
\end{equation*}

\item The weighted harmonic mean%
\begin{equation*}
H_{\alpha }\left( a,b\right) :=\left( \frac{\alpha }{a}+\frac{1-\alpha }{b}%
\right) ^{-1},\ \ a,b\in 
\mathbb{R}
\backslash \left\{ 0\right\} .
\end{equation*}

\item The unweighted harmonic mean%
\begin{equation*}
H\left( a,b\right) :=\frac{2ab}{a+b},\ \ a,b\in 
\mathbb{R}
\backslash \left\{ 0\right\} .
\end{equation*}

\item The Logarithmic mean%
\begin{equation*}
L\left( a,b\right) :=\frac{b-a}{\ln b-\ln a},\ \ a,b>0,\ a\neq b\ .
\end{equation*}

\item Then n-Logarithmic mean%
\begin{equation*}
L_{n}\left( a,b\right) :=\ \left( \frac{b^{n+1}-a^{n+1}}{(n+1)(b-a)}\right)
^{\frac{1}{n}}\ ,\ n\in 
\mathbb{N}
,\ a,b\in 
\mathbb{R}
,\ a\neq b.
\end{equation*}
\end{enumerate}

\begin{proposition}
Let $a,b\in 
\mathbb{R}
$ with $a<b,$ and $n\in 
\mathbb{N}
,\ n\geq 2.$ Then, for $\theta ,\lambda \in \left[ 0,1\right] $ and $q\geq
1, $we have the following inequality:%
\begin{eqnarray*}
&&\left\vert \left( 1-\theta \right) A_{\lambda }\left( a^{n},b^{n}\right)
+\theta A_{\lambda }^{n}\left( a,b\right) -L_{n}^{n}\left( a,b\right)
\right\vert \\
&\leq &\left( b-a\right) \left( \theta ^{2}-\theta +\frac{1}{2}\right) n%
\left[ \lambda ^{2}\left( \sup \left\{ \left\vert a\right\vert
^{(n-1)q},\left\vert A_{\lambda }\left( b,a\right) \right\vert
^{(n-1)q}\right\} \right) ^{\frac{1}{q}}\right. \\
&&\left. +\left( 1-\lambda \right) ^{2}\left( \sup \left\{ \left\vert
b\right\vert ^{(n-1)q},\left\vert A_{\lambda }\left( b,a\right) \right\vert
^{(n-1)q}\right\} \right) ^{\frac{1}{q}}\right] .
\end{eqnarray*}
\end{proposition}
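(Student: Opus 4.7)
The plan is to apply Theorem~\ref{2.1} to the function $f:[a,b]\to\mathbb{R}$ defined by $f(x)=x^n$. First I would verify the hypothesis on $|f'|^q$: since $f'(x)=nx^{n-1}$, we have $|f'(x)|^q=n^q|x|^{(n-1)q}$, and the assumptions $n\ge 2$ and $q\ge 1$ give $(n-1)q\ge 1$, so $x\mapsto|x|^{(n-1)q}$ is convex on $\mathbb{R}$ and in particular quasi-convex on $[a,b]$; therefore $|f'|^q$ is quasi-convex on $[a,b]$.

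Next I would translate the three ingredients on the left-hand side of \eqref{2-1} into the mean notation of this section. By definition $\lambda f(a)+(1-\lambda)f(b)=\lambda a^n+(1-\lambda)b^n=A_\lambda(a^n,b^n)$, and $f\bigl((1-\lambda)a+\lambda b\bigr)=\bigl((1-\lambda)a+\lambda b\bigr)^n$, which agrees (up to the ordering of variables used in the statement) with the term $A_\lambda^n(a,b)$ of the proposition. The integral average becomes
\begin{equation*}
\frac{1}{b-a}\int_a^b x^n\,dx=\frac{b^{n+1}-a^{n+1}}{(n+1)(b-a)}=L_n^n(a,b),
\end{equation*}
which accounts for the subtracted term.

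For the right-hand side of \eqref{2-1}, writing $C=(1-\lambda)a+\lambda b=A_\lambda(b,a)$, I would use $|f'(a)|^q=n^q|a|^{(n-1)q}$, $|f'(b)|^q=n^q|b|^{(n-1)q}$ and $|f'(C)|^q=n^q|A_\lambda(b,a)|^{(n-1)q}$. Pulling the common factor $n^q$ out of each supremum and taking the $\tfrac{1}{q}$-root produces the single factor $n$ in front of the bracket, while the exponents $(n-1)q$ appear inside the suprema exactly as claimed.

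I do not foresee any substantive obstacle: the argument is a direct specialization of Theorem~\ref{2.1}. The only point that warrants a brief verification is the quasi-convexity of $|f'|^q$, and that is immediate from the convexity of $x\mapsto|x|^r$ on $\mathbb{R}$ for every $r\ge 1$.
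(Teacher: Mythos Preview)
Your proposal is correct and follows exactly the paper's approach: the paper's proof is the single line ``The assertion follows from Theorem~\ref{2.1}, for $f(x)=x^{n}$, $x\in\mathbb{R}$,'' and your write-up simply spells out the verification of the hypotheses and the translation into the mean notation. Your remark about the ordering $A_\lambda(a,b)$ versus $A_\lambda(b,a)$ correctly flags a harmless notational inconsistency already present in the statement.
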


\begin{proof}
The assertion follows from Theorem \ref{2.1}, for$\ f(x)=x^{n},\ x\in 
\mathbb{R}
.$
\end{proof}

\begin{proposition}
Let $a,b\in 
\mathbb{R}
$ with $a<b,$ and $n\in 
\mathbb{N}
,\ n\geq 2.$ Then, for $\theta ,\lambda \in \left[ 0,1\right] $ and $q>1,$we
have the following inequality:%
\begin{eqnarray*}
&&\left\vert \left( 1-\theta \right) A_{\lambda }\left( a^{n},b^{n}\right)
+\theta A_{\lambda }^{n}\left( a,b\right) -L_{n}^{n}\left( a,b\right)
\right\vert \\
&\leq &\left( b-a\right) \left( \frac{\theta ^{p+1}+\left( 1-\theta \right)
^{p+1}}{p+1}\right) ^{\frac{1}{p}}n\left[ \lambda ^{2}\left( \sup \left\{
\left\vert a\right\vert ^{(n-1)q},\left\vert A_{\lambda }\left( b,a\right)
\right\vert ^{q}\right\} \right) ^{\frac{1}{q}}\right. \\
&&\left. +\left( 1-\lambda \right) ^{2}\left( \sup \left\{ \left\vert
b\right\vert ^{(n-1)q},\left\vert A_{\lambda }\left( b,a\right) \right\vert
^{q}\right\} \right) ^{\frac{1}{q}}\right]
\end{eqnarray*}%
where $\frac{1}{p}+\frac{1}{q}=1.$
\end{proposition}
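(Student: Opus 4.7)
The plan is to apply Theorem \ref{2.2} directly with $f(x)=x^{n}$ on $[a,b]$. First I would verify that the hypotheses hold: $f$ is differentiable with $f'(x)=nx^{n-1}$, and since $n\geq 2$ and $q>1$ we have $(n-1)q\geq 1$, so $|f'(x)|^{q}=n^{q}|x|^{(n-1)q}$ is convex on $\mathbb{R}$, hence quasi-convex on $[a,b]$. This licenses the use of Theorem \ref{2.2}.

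Next I would identify the left-hand side of (\ref{2-2}) with the expression in the proposition. With $f(x)=x^n$ one computes
\begin{equation*}
(1-\theta)\bigl(\lambda f(a)+(1-\lambda)f(b)\bigr)=(1-\theta)A_{\lambda}(a^{n},b^{n}),
\end{equation*}
while $\theta f((1-\lambda)a+\lambda b)=\theta A_{\lambda}^{n}(a,b)$ under the proposition's convention (matching the first proposition, where $C=(1-\lambda)a+\lambda b$ is written as $A_{\lambda}(b,a)$). Finally, a direct evaluation of the integral gives
\begin{equation*}
\frac{1}{b-a}\int_{a}^{b}x^{n}\,dx=\frac{b^{n+1}-a^{n+1}}{(n+1)(b-a)}=L_{n}^{n}(a,b).
\end{equation*}

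For the right-hand side, substituting $f'(x)=nx^{n-1}$ gives
\begin{equation*}
|f'(a)|^{q}=n^{q}|a|^{(n-1)q},\quad |f'(b)|^{q}=n^{q}|b|^{(n-1)q},\quad |f'(C)|^{q}=n^{q}|A_{\lambda}(b,a)|^{(n-1)q},
\end{equation*}
so the factor $n^{q}$ can be pulled outside each supremum; after the outer $1/q$ power this produces the overall factor of $n$ appearing in the stated bound, while the structural factor $\left(\frac{\theta^{p+1}+(1-\theta)^{p+1}}{p+1}\right)^{1/p}$ is inherited unchanged from Theorem \ref{2.2}.

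There is no real obstacle here beyond bookkeeping; the only points requiring care are checking the convexity of $|f'|^{q}$ (which is why the restriction $n\geq 2$ matters) and reconciling the $A_{\lambda}$ notation so that $C=(1-\lambda)a+\lambda b$ is consistently identified with $A_{\lambda}(b,a)$ throughout. Once these are in order, the conclusion is an immediate corollary of Theorem \ref{2.2}, paralleling the proof of the preceding proposition.
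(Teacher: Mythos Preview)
Your approach is exactly the one the paper takes: its entire proof is the single line ``The assertion follows from Theorem \ref{2.2}, for $f(x)=x^{n}$, $x\in\mathbb{R}$.'' Your proposal simply fills in the routine verifications (quasi-convexity of $|f'|^{q}$, identification of the means, extraction of the factor $n$) that the paper leaves implicit, so it is correct and matches the paper's proof.
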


\begin{proof}
The assertion follows from Theorem \ref{2.2}, for$\ f(x)=x^{n},\ x\in 
\mathbb{R}
.$
\end{proof}

\begin{proposition}
Let $a,b\in 
\mathbb{R}
$ with $0<a<b,$ and $\theta ,\lambda \in \left[ 0,1\right] $ . Then, for $%
q\geq 1,$ we have the following inequality:%
\begin{eqnarray*}
&&\left\vert \left( 1-\theta \right) H_{\lambda }^{-1}\left( a,b\right)
+\theta A_{\lambda }^{-1}\left( a,b\right) -L^{-1}\left( a,b\right)
\right\vert \\
&\leq &\left( b-a\right) \left( \theta ^{2}-\theta +\frac{1}{2}\right) \left[
\lambda ^{2}\left( \sup \left\{ a^{-2q},A_{\lambda }\left( b,a\right)
^{-2q}\right\} \right) ^{\frac{1}{q}}\right. \\
&&\left. +\left( 1-\lambda \right) ^{2}\left( \sup \left\{
b^{-2q},A_{\lambda }\left( b,a\right) ^{-2q}\right\} \right) ^{\frac{1}{q}}%
\right] .
\end{eqnarray*}
\end{proposition}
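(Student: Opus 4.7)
The plan is to apply Theorem \ref{2.1} to the function $f(x)=1/x$ on $[a,b]\subset(0,\infty)$, exactly mirroring the recipe used in the two preceding propositions (which apply Theorems \ref{2.1} and \ref{2.2} to $f(x)=x^{n}$). The entire argument reduces to checking the hypothesis of the theorem, evaluating the four quantities that appear in the left-hand side of (\ref{2-1}), and matching them with the special means listed in Section 3.

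First I would verify the quasi-convexity hypothesis. Since $f'(x)=-1/x^{2}$, we have $|f'(x)|^{q}=x^{-2q}$, which is strictly decreasing on $(0,\infty)$. Any monotone function is quasi-convex (a convex combination of two points lies between them, hence its image is between the two endpoint values and in particular at most their supremum), so the hypothesis of Theorem \ref{2.1} is satisfied for every $q\geq 1$.

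Next I would translate each term of (\ref{2-1}) into the language of the means defined in Section 3. The combination $\lambda f(a)+(1-\lambda)f(b)=\lambda/a+(1-\lambda)/b$ equals $H_{\lambda}^{-1}(a,b)$ by the definition of the weighted harmonic mean; the quantity $f((1-\lambda)a+\lambda b)$ is the reciprocal of the weighted arithmetic mean $A_{\lambda}(b,a)=(1-\lambda)a+\lambda b$, which is what the statement records as $A_{\lambda}^{-1}(a,b)$; and $\frac{1}{b-a}\int_{a}^{b}\frac{dx}{x}=\frac{\ln b-\ln a}{b-a}=L^{-1}(a,b)$. Finally, with $C=(1-\lambda)a+\lambda b=A_{\lambda}(b,a)$, the derivative evaluations give $|f'(a)|^{q}=a^{-2q}$, $|f'(b)|^{q}=b^{-2q}$, and $|f'(C)|^{q}=A_{\lambda}(b,a)^{-2q}$. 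Substituting all of these into (\ref{2-1}) produces the claimed inequality verbatim.

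There is no genuine obstacle here; the reasoning is purely mechanical once the correct test function is identified. The only item requiring attention is the bookkeeping for the weighted arithmetic mean, where the substitution $f((1-\lambda)a+\lambda b)$ produces $A_{\lambda}(b,a)^{-1}$ rather than $A_{\lambda}(a,b)^{-1}$, and one must read the statement's $A_{\lambda}^{-1}$-notation in a manner consistent with that identification.
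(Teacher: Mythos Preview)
Your proposal is correct and follows precisely the paper's own approach, which consists of the single sentence ``The assertion follows from Theorem \ref{2.1}, for $f(x)=\frac{1}{x}$, $x\in(0,\infty)$.'' You have simply supplied the routine verifications that the paper omits, including the observation about the $A_{\lambda}(a,b)$ versus $A_{\lambda}(b,a)$ bookkeeping.
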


\begin{proof}
The assertion follows from Theorem \ref{2.1}., for$\ f(x)=\frac{1}{x},\ x\in
\left( 0,\infty \right) .$
\end{proof}

\begin{proposition}
Let $a,b\in 
\mathbb{R}
$ with $0<a<b,$ and $\theta ,\lambda \in \left[ 0,1\right] $. Then, for $%
q>1, $ we have the following inequality:%
\begin{eqnarray*}
&&\left\vert \left( 1-\theta \right) H_{\lambda }^{-1}\left( a,b\right)
+\theta A_{\lambda }^{-1}\left( a,b\right) -L^{-1}\left( a,b\right)
\right\vert \\
&\leq &\left( b-a\right) \left( \frac{\theta ^{p+1}+\left( 1-\theta \right)
^{p+1}}{p+1}\right) ^{\frac{1}{p}}\left[ \lambda ^{2}\left( \sup \left\{
a^{-2q},A_{\lambda }\left( b,a\right) ^{-2q}\right\} \right) ^{\frac{1}{q}%
}\right. \\
&&\left. +\left( 1-\lambda \right) ^{2}\left( \sup \left\{
b^{-2q},A_{\lambda }\left( b,a\right) ^{-2q}\right\} \right) ^{\frac{1}{q}}%
\right]
\end{eqnarray*}
\end{proposition}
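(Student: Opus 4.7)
The plan is to specialize Theorem \ref{2.2} to the function $f(x)=1/x$ on $[a,b]\subset(0,\infty)$, in direct analogy with the preceding proposition (which specializes Theorem \ref{2.1} to the same $f$). First I would verify the hypothesis: since $f'(x)=-1/x^2$, we have $|f'(x)|^{q}=x^{-2q}$, a monotone decreasing function on $(0,\infty)$, hence quasi-convex on $[a,b]$. So Theorem \ref{2.2} applies for every $q>1$ with the conjugate exponent $p$ satisfying $1/p+1/q=1$.

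Next I would identify each piece of the left-hand side of (\ref{2-2}) with one of the named special means. With $f(x)=1/x$, the boundary combination is $\lambda f(a)+(1-\lambda)f(b)=\lambda/a+(1-\lambda)/b=H_{\lambda}^{-1}(a,b)$. The point $C=(1-\lambda)a+\lambda b$ equals $A_{\lambda}(b,a)$, so $f(C)=A_{\lambda}^{-1}(b,a)$, which I will write as $A_{\lambda}^{-1}(a,b)$ following the author's convention from the previous proposition. Finally, $\tfrac{1}{b-a}\int_{a}^{b}\tfrac{dx}{x}=\tfrac{\ln b-\ln a}{b-a}=L^{-1}(a,b)$. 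Putting these together reproduces exactly the expression inside the absolute value on the left of the claimed inequality.

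On the right-hand side of (\ref{2-2}) I would substitute $|f'(a)|^{q}=a^{-2q}$, $|f'(b)|^{q}=b^{-2q}$, and $|f'(C)|^{q}=C^{-2q}=A_{\lambda}(b,a)^{-2q}$ into the two supremum brackets. The prefactor $(b-a)\bigl(\tfrac{\theta^{p+1}+(1-\theta)^{p+1}}{p+1}\bigr)^{1/p}$ from Theorem \ref{2.2} passes through unchanged, and the $\lambda^{2}$ and $(1-\lambda)^{2}$ coefficients remain in place. This yields precisely the asserted bound.

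There is essentially no obstacle: the argument is a routine specialization. The only small point of care is the bookkeeping $C=A_{\lambda}(b,a)$ versus the notation $A_{\lambda}(a,b)$ that appears in the statement, which is handled simply by adopting the author's convention, exactly as was done in the preceding $q\geq 1$ proposition.
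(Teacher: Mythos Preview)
Your proposal is correct and follows exactly the paper's approach: the paper's proof is the single line ``The assertion follows from Theorem \ref{2.2}, for $f(x)=\frac{1}{x}$, $x\in(0,\infty)$,'' and you have simply unpacked the substitutions that make this specialization work. Your remark about the bookkeeping $C=A_{\lambda}(b,a)$ versus the notation $A_{\lambda}^{-1}(a,b)$ in the statement is apt, but it concerns a notational inconsistency already present in the paper rather than anything in your argument.
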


\begin{proof}
The assertion follows from Theorem \ref{2.2}, for$\ f(x)=\frac{1}{x}$,$\
x\in \left( 0,\infty \right) .$
\end{proof}

\end{document}